\documentclass[12pt]{article}

\usepackage{amsmath, amssymb, amsfonts, amsthm}
\usepackage{mathtools}
\usepackage[dvipsnames]{xcolor}
\usepackage{graphicx}
\usepackage{geometry}
\usepackage[hypertexnames=false,colorlinks=true,linkcolor=blue,citecolor=blue,urlcolor=blue]{hyperref}
\newtheorem{theorem}{Theorem}[section]
\newtheorem{proposition}{Proposition}[section]
\newtheorem{remark}{Remark}[section]

\title{\textbf{A Minimal Bifurcation Model of Load Imbalance\\
in a Softmax Mixture-of-Experts Router}}

\author{O.~M. Kiselev\\
\textit{Innopolis University, Innopolis, Russia}\\
\texttt{o.kiselev@innopolis.ru}}
\date{}

\begin{document}

\maketitle

\begin{abstract}
We propose a minimal dynamical model of adaptive softmax routing for a two-expert Mixture-of-Experts (MoE) layer. The model is obtained as a mean-field limit of a discrete reinforcement rule: the selected expert receives a small score increment, while all scores undergo regularizing decay. In the symmetric case the limiting system has a supercritical pitchfork bifurcation: for weak feedback there is a unique stable balanced state, whereas above a critical feedback strength two stable asymmetric states appear. When an external asymmetry is added, the pitchfork unfolds into a pair of fold bifurcations forming a cusp in the control-parameter plane. We derive exact parametric equations for the bifurcation set and the local normal form of the cusp catastrophe. Numerical experiments connect this picture to empirical expert load, a small trainable MoE model, hard top-$1$ PyTorch routing, and a small classification experiment on \texttt{digits}. The results provide a controlled low-dimensional mechanism for abrupt transitions to load imbalance in adaptive MoE routers.
\end{abstract}

\noindent\textbf{Keywords:} Mixture-of-Experts, softmax routing, pitchfork bifurcation, fold bifurcation, cusp catastrophe, hysteresis, load imbalance.

\section{Introduction}

Mixture-of-Experts (MoE) architectures use sparse routing: for each input, only a small subset of experts is activated. This makes it possible to increase the number of parameters without a proportional increase in computational cost. The same mechanism, however, is sensitive to load imbalance: the router may start selecting a small group of experts systematically, leaving the remaining experts almost inactive.

The full training dynamics of an MoE model involves router parameters, expert parameters, the data distribution, discrete top-$k$ selection, additional load-balancing penalties, and optimization noise. A direct rigorous analysis of realistic MoE training in full generality is therefore unlikely to be tractable. This paper considers a narrower question: can one construct a minimal dynamical system in which softmax routing and positive feedback alone already produce multistability, hysteresis, and an abrupt transition to asymmetric load?

The main object of the paper is a one-dimensional reduction of an adaptive router. It gives a mathematically controlled model of a possible local mechanism of load collapse. Unlike a purely phenomenological normal form, the nonlinearity here comes from the exact two-expert softmax formula, and the ordinary differential equation itself is obtained as the limit of an explicit stochastic update rule.

\section{Related Work and Motivation}

The classical mixture of local experts architecture was introduced by Jacobs, Jordan, Nowlan, and Hinton \cite{Jacobs1991}; its hierarchical extension and EM interpretation were developed by Jordan and Jacobs \cite{JordanJacobs1994}. The idea of describing expert specialization as a phase transition predates modern large sparse MoE models. Kang and Oh \cite{KangOh1996} carried out a statistical-mechanical analysis of the generalization behavior of the classical mixture of experts. In their setting, for a small number of training examples the system is in a symmetric phase in which expert roles are not specialized; after crossing a critical point, a symmetry-broken phase appears in which the gating network effectively partitions the input space among experts. For hierarchical MoE models, the same work finds multiple successive phase transitions. This line of ideas--criticality, symmetry breaking, and expert specialization--is the closest predecessor of the bifurcation perspective developed here.

Modern sparse MoE models make the routing problem more acute; a recent survey of MoE methods for large language models is given in \cite{Cai2025Survey}. In sparsely-gated MoE, Switch Transformer, GLaM, and ST-MoE \cite{Shazeer2017,Fedus2022,Du2022,Zoph2022}, scaling is achieved by sparse expert activation, but stable training requires dedicated load-balancing mechanisms. If the router concentrates token flow on a small number of experts too early, routing collapse or degraded expert utilization may occur. Practical architectures therefore include auxiliary losses, router z-losses, capacity constraints, noise in routing scores, and other stabilizing components.

A closely related engineering line is represented by GShard \cite{Lepikhin2021}, where conditional computation is used in very large MoE models and load balancing is a necessary component of stable training and distributed execution. These works make clear that load balancing is a central architectural issue.

It is also important to distinguish approaches in which balancing is not only an added loss term. BASE Layers \cite{Lewis2021} formulate routing as a balanced assignment of tokens to experts; Expert Choice Routing \cite{Zhou2022} reverses the direction of selection by allowing experts to choose tokens; Loss-Free Balancing \cite{Wang2024} introduces expert-wise biases before top-$K$ routing and updates these biases according to recent expert load. The latter construction is close in spirit to the variables $b_i$ and $r_i$ in our model: balancing acts not merely as a static regularizer, but as an additional slow dynamics that influences routing scores.

More recent preprints study routing dynamics explicitly. Mouzouni \cite{Mouzouni2026} models routing as a congestion game and argues that expert load in open MoE checkpoints passes through several phases: early balancing, stabilized specialization, and late relaxation of balance in favor of quality. Rastegar \cite{Rastegar2026} analyzes the singular soft-to-hard limit of softmax-routed MoE and shows that small temperature concentrates complexity near routing interfaces; in a two-expert Gaussian calculation, a local symmetry-breaking mechanism appears.

Nevertheless, the language and tools of bifurcation theory are rarely used explicitly in MoE analysis. This paper occupies an intermediate position between the statistical-mechanical picture of phase transitions and the engineering literature on load balancing. We introduce a minimal adaptive system in which symmetry breaking, folds, a cusp, and hysteresis can be computed directly. The goal is to provide a local normal form for one possible mechanism of load imbalance.

The paper introduces an explicit stochastic model for the adaptation of router scores. The softmax nonlinearity is not postulated at the level of a normal form; it enters before reduction. For the two-expert reduction we obtain exact equations for the fold set and the hysteresis width in terms of feedback strength, temperature, forgetting, and skew. We also show how standard balancing mechanisms can be interpreted as moving the system out of the multistable region by decreasing the effective positive feedback. Numerical experiments test the mean-field predictions in batch-routing dynamics and illustrate which elements of the picture persist, and which change, in small hard-routed PyTorch MoE models.

\section{An Adaptive Softmax Router}

\subsection{Two-Expert Model}

Consider two experts with internal attractiveness scores $r_1^n$ and $r_2^n$ at a discrete step $n$. These scores are aggregate slow variables of the router. Their interpretation is simple: the selected expert receives positive reinforcement, while the absence of sustained reinforcement is compensated by score decay.

Selection probabilities are given by softmax with temperature $T>0$:
\begin{equation}
    p_i(r_1^n,r_2^n)
    =
    \frac{\exp(r_i^n/T)}
    {\exp(r_1^n/T)+\exp(r_2^n/T)},
    \qquad i=1,2.
    \label{eq:softmax-probabilities}
\end{equation}
Let $I_n\in\{1,2\}$ denote the expert selected at step $n$, with
\[
    \mathbb{P}(I_n=i\mid r_1^n,r_2^n)=p_i(r_1^n,r_2^n).
\]
We define the update rule
\begin{equation}
    r_i^{n+1}
    =
    r_i^n
    +
    \eta
    \left(
        a\mathbf{1}_{\{I_n=i\}}
        -
        \gamma r_i^n
        +
        b_i
    \right),
    \qquad i=1,2.
    \label{eq:discrete-update}
\end{equation}
Here $\eta>0$ is a small adaptation step, $a>0$ is the reinforcement strength for the selected expert, $\gamma>0$ is the forgetting coefficient, and $b_i$ is an external drift associated with data or architectural asymmetry.

\begin{proposition}[Mean-Field Limit]
Let $r^\eta(t)$ be the piecewise-linear interpolation of \eqref{eq:discrete-update} on the time scale $t=n\eta$. Then, on every finite time interval, as $\eta\to 0$ the process $r^\eta(t)$ converges in probability to the solution of
\begin{equation}
    \dot r_i = a p_i - \gamma r_i + b_i,
    \qquad i=1,2.
    \label{eq:ri-dynamics}
\end{equation}
\end{proposition}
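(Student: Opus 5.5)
We will prove the statement by a standard stochastic-approximation argument: split the increment into its conditional mean and a martingale difference, then close the estimate with Gronwall's inequality. Write $F_i(r)=a\,p_i(r)-\gamma r_i+b_i$ and $\xi_i^{n}=a\bigl(\mathbf{1}_{\{I_n=i\}}-p_i(r^n)\bigr)$. The update \eqref{eq:discrete-update} becomes
\[
r^{n+1}=r^n+\eta F(r^n)+\eta\,\xi^{n},
\]
where $\mathbb{E}[\xi^n\mid\mathcal F_n]=0$ for the natural filtration $\mathcal F_n=\sigma(r^0,I_0,\dots,I_{n-1})$, and $|\xi_i^n|\le a$ almost surely.

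First we record the regularity of $F$. The softmax map \eqref{eq:softmax-probabilities} depends only on $(r_1-r_2)/T$, through the logistic function, and has derivative bounded by $1/(4T)$. Hence $F$ is globally Lipschitz with a constant $L\le a/(2T)+\gamma$. It follows that \eqref{eq:ri-dynamics} has a unique global solution $r(t)$ for each initial condition $r(0)=r^0$. We assume the discrete process starts at the same point, or at a point converging to it. No a priori confinement is needed, because the Lipschitz bound is global.

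Next we compare the two at grid times $t_n=n\eta\le t^*$. Summing the recursion gives
\[
r^n=r^0+\eta\sum_{k<n}F(r^k)+M_n,\qquad M_n=\eta\sum_{k<n}\xi^k,
\]
while $r(t_n)=r^0+\int_0^{t_n}F(r(s))\,ds$. The discretization error $\bigl|\eta\sum_{k<n}F(r(t_k))-\int_0^{t_n}F(r(s))ds\bigr|$ is $O(\eta)$ uniformly on $[0,t^*]$. This holds because $r(\cdot)$ is Lipschitz in time, with $|\dot r|\le a+\gamma\sup|r|+|b|$ bounded on compact time intervals, and $F$ is Lipschitz. Setting $e_n=\max_{m\le n}|r^m-r(t_m)|$, we obtain
\[
e_n\le C\eta+\eta L\sum_{k<n}e_k+\max_{m\le n}|M_m|,
\]
and the discrete Gronwall lemma yields
\[
e_{\lfloor t^*/\eta\rfloor}\le e^{Lt^*}\Bigl(C\eta+\max_{m\le t^*/\eta}|M_m|\Bigr).
\]

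The main step is the martingale term. $M_n$ is an $\mathcal F_n$-martingale with increments bounded by $\eta a$. Doob's $L^2$ maximal inequality gives
\[
\mathbb{E}\max_{m\le t^*/\eta}|M_m|^2\le 4\eta^2\sum_{k<t^*/\eta}\mathbb{E}|\xi^k|^2\le 8a^2t^*\eta\to0,
\]
so $\max|M_m|\to0$ in probability; Azuma–Hoeffding would also give an exponential tail. Finally we pass from grid points to the piecewise-linear interpolation $r^\eta(t)$. On each interval $[t_n,t_{n+1}]$, both $r^\eta$ and $r$ move by at most $O(\eta)$: the discrete increment is bounded by $\eta(a+\gamma|r^n|+|b|)$ with $|r^n|$ controlled by the previous estimate, and the ODE solution has bounded speed. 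Therefore $\sup_{t\le t^*}|r^\eta(t)-r(t)|\to0$ in probability.

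Since $F$ is globally Lipschitz and the noise is bounded, we expect no serious obstacle. The only care needed is to make the constants uniform in $\eta$, and the Gronwall estimate provides this.
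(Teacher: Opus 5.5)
Your proof is correct and follows the same route as the paper's: you split the increment into the drift $a p_i-\gamma r_i+b_i$ plus a bounded martingale difference, show that the accumulated martingale term has second moment $O(\eta)$ (which you make precise via Doob's $L^2$ maximal inequality), and close with a Gronwall estimate. One small improvement over the paper's ``Lipschitz on bounded sets'': you observe that $F$ is globally Lipschitz, since the softmax depends only on $(r_1-r_2)/T$, and this removes any need for a localization step.
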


\begin{proof}
This is a standard application of the ODE method for stochastic approximation \cite{EthierKurtz1986,Borkar2008}. For completeness we give the short verification in the present finite-dimensional setting. The conditional mean increment is
\[
    \mathbb{E}
    \left[
        \frac{r_i^{n+1}-r_i^n}{\eta}
        \,\middle|\,
        r_1^n,r_2^n
    \right]
    =
    a p_i(r_1^n,r_2^n)-\gamma r_i^n+b_i.
\]
The remaining part of the increment is a bounded martingale difference. Over a time interval of order one the number of steps is of order $\eta^{-1}$, hence the quadratic variation of the accumulated martingale contribution is of order $\eta$ and tends to zero. Since the right-hand side of the limiting system is smooth and Lipschitz on bounded sets, the standard Gronwall estimate gives convergence of the interpolations to the solution of \eqref{eq:ri-dynamics}.
\end{proof}

We analyze the limiting system \eqref{eq:ri-dynamics}. Introduce the score difference
\begin{equation}
    y = r_1-r_2.
\end{equation}
For the two-expert softmax we have the exact identity
\begin{equation}
    p_1(y)-p_2(y)
    =
    \tanh\frac{y}{2T}.
    \label{eq:softmax-difference}
\end{equation}
Subtracting the equations for $r_1$ and $r_2$ gives the closed scalar system
\begin{equation}
    \dot y
    =
    F(y;a,\gamma,T,h)
    =
    a\tanh\frac{y}{2T}
    -
    \gamma y
    +
    h,
    \qquad
    h=b_1-b_2.
    \label{eq:main-model}
\end{equation}
The load difference in this model is
\begin{equation}
    u(y)=p_1(y)-p_2(y)=\tanh\frac{y}{2T}.
\end{equation}
Thus asymmetric equilibria in $y$ correspond directly to asymmetric expert load.

System \eqref{eq:main-model} is a minimal model. Its role is to isolate one mechanism: positive feedback through softmax can compete with regularizing forgetting and produce multiple stable routing regimes.

\subsection{Linearization for \texorpdfstring{$N$}{N} Experts}

The full analysis below is for two experts. We first indicate how the two-expert model relates to the many-expert case. Consider the direct $N$-expert generalization of the mean-field system:
\begin{equation}
    \dot r_i
    =
    a p_i(r)
    -
    \gamma r_i
    +
    b_i,
    \qquad
    p_i(r)
    =
    \frac{\exp(r_i/T)}
    {\sum_{j=1}^N\exp(r_j/T)}.
    \label{eq:n-expert-mean-field}
\end{equation}
In the symmetric case $b_i=0$ there is an equilibrium
\[
    r_1=\cdots=r_N=\frac{a}{N\gamma}.
\]
The softmax Jacobian at this point is
\[
    \frac{\partial p_i}{\partial r_j}
    =
    \frac{1}{NT}
    \left(
        \delta_{ij}-\frac{1}{N}
    \right).
\]
Therefore the linearization of \eqref{eq:n-expert-mean-field} has eigenvalue $-\gamma$ along the common-shift direction $(1,\ldots,1)$ and eigenvalue
\begin{equation}
    \lambda_{\mathrm{contr}}
    =
    \frac{a}{NT}
    -
    \gamma
    \label{eq:n-expert-contrast-eigenvalue}
\end{equation}
on the $(N-1)$-dimensional subspace of contrast modes
\[
    \sum_{i=1}^N v_i=0.
\]
Thus the uniform state loses linear stability at
\begin{equation}
    a=N\gamma T.
    \label{eq:n-expert-threshold}
\end{equation}
For $N=2$ this condition coincides with the threshold $a=2\gamma T$ analyzed below.

This linearization is not a substitute for a full many-expert analysis. After loss of stability one obtains a multidimensional mode-selection problem that depends on data asymmetries, initialization, capacity constraints, and top-$k$ mechanics. The two-expert model in this paper is interpreted as a local reduction to one dominant contrast mode, for example the competition between two experts or two clusters of experts. In this sense it describes one elementary channel of balance loss within the full geometry of an $N$-expert MoE.

\subsection{Gradient Structure}

System \eqref{eq:main-model} is a gradient system on the line:
\begin{equation}
    \dot y = -\frac{dV}{dy},
\end{equation}
where the potential is
\begin{equation}
    V(y)
    =
    \frac{\gamma}{2}y^2
    -
    2aT\log\cosh\frac{y}{2T}
    -
    hy.
    \label{eq:potential}
\end{equation}
Stable equilibria correspond to local minima of $V$, and unstable equilibria to local maxima. This gives a useful interpretation of hysteresis: as parameters vary, one of the potential minima may disappear, forcing the system to move to another minimum.

\section{The Symmetric Case}

We first consider $h=0$, when there is no external preference for either expert. The system becomes
\begin{equation}
    \dot y
    =
    a\tanh\frac{y}{2T}
    -
    \gamma y.
    \label{eq:symmetric}
\end{equation}
The right-hand side is odd, so $y=0$ is always an equilibrium. The local bifurcation terminology used below is standard for one-dimensional vector fields; see, for example, \cite{GuckenheimerHolmes1983,Kuznetsov2004}.

\begin{theorem}[Pitchfork Bifurcation]
Let $a,\gamma,T>0$ and $h=0$. For system \eqref{eq:symmetric} the following hold.
\begin{enumerate}
    \item If $a<2\gamma T$, then $y=0$ is the unique equilibrium and is asymptotically stable.
    \item If $a=2\gamma T$, then $y=0$ is the unique equilibrium; it is asymptotically stable but nonhyperbolic.
    \item If $a>2\gamma T$, then $y=0$ is unstable and there are exactly two nonzero stable equilibria $\pm y_\ast$.
\end{enumerate}
At $a=2\gamma T$ the system undergoes a supercritical pitchfork bifurcation.
\end{theorem}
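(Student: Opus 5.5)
We reduce everything to one rescaled scalar function. Set $x=y/(2T)$ and $\mu=a/(2\gamma T)$. Then the right-hand side of \eqref{eq:symmetric} becomes
\[
F(y)=\gamma\cdot 2T\,\bigl(\mu\tanh x - x\bigr).
\]
Equilibria are therefore the solutions of $g(x):=\mu\tanh x - x=0$. Since $g$ is odd, it suffices to count positive roots. For stability we use the gradient structure \eqref{eq:potential}, or simply the sign of $F$ in dimension one: an isolated equilibrium is asymptotically stable exactly when $F$ changes sign from $+$ to $-$ across it.

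\textbf{Counting roots.} On $x>0$ consider $\phi(x)=\tanh x / x$. We will show that $\phi$ is strictly decreasing from $\phi(0^+)=1$ to $0$. Indeed, $\phi'(x)=(x\,\mathrm{sech}^2x-\tanh x)/x^2$, and the numerator vanishes at $0$ and has derivative $-2x\,\mathrm{sech}^2x\tanh x<0$. Positive roots of $g$ are exactly the solutions of $\phi(x)=1/\mu$. Hence there is no positive root if $\mu\le 1$, and exactly one root $x_\ast>0$ if $\mu>1$; we put $y_\ast=2Tx_\ast$. By oddness, $-y_\ast$ is the only negative root. This settles the equilibrium counts in all three items.

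\textbf{Stability.} We read off the sign of $g$. For $\mu\le1$ and $x>0$ we have $g(x)=x(\mu\phi(x)-1)<0$, and $g>0$ for $x<0$ by oddness. Thus $0$ is globally attracting, hence asymptotically stable, in items 1 and 2. In item 2 we have $g'(0)=\mu-1=0$, so $0$ is nonhyperbolic. The Taylor expansion $g(x)=-x^3/3+O(x^5)$ confirms cubic attraction there, which is the supercritical signature. For $\mu>1$ we have $g'(0)=\mu-1>0$, so $0$ is unstable. Moreover $g>0$ on $(0,x_\ast)$ and $g<0$ on $(x_\ast,\infty)$, because $\mu\phi-1$ changes sign exactly once. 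So $x_\ast$ is stable, and it is in fact hyperbolic: $g'(x_\ast)=\mu\,\mathrm{sech}^2x_\ast-1<\mu\phi(x_\ast)-1=0$, using $\mathrm{sech}^2x<\tanh x/x$ from the monotonicity computation above. The equilibrium $-y_\ast$ is handled by symmetry.

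\textbf{Pitchfork.} To identify the bifurcation, we check the standard conditions at $(x,\mu)=(0,1)$ for an odd family: $g_x=0$, $g_{x\mu}=1\neq0$, and $g_{xxx}=-2<0$. This gives a supercritical pitchfork, with branches $x_\ast^2\approx 3(\mu-1)$. The main (mild) obstacle is the monotonicity of $\tanh x/x$, which the derivative argument above handles. Everything else is sign bookkeeping.
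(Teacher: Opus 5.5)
Your proof is correct. It shares the paper's skeleton: the sign of the right-hand side on the positive half-line, oddness, linearization at $0$, and the negative cubic coefficient. The key monotonicity lemma is different, though.

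The paper counts the positive root using strict concavity of $F_0$ on $(0,\infty)$. There $F_0'(y)=\frac{a}{2T}\operatorname{sech}^2\frac{y}{2T}-\gamma$ is strictly decreasing, and together with $F_0(0)=0$, $F_0'(0)>0$ and $F_0\to-\infty$ this gives exactly one zero, with $F_0'(y_\ast)<0$. You instead use strict monotonicity of the secant slope $\tanh x/x$. This turns the equilibrium equation into the single monotone equation $\tanh x/x=1/\mu$ and settles all three regimes uniformly. Your hyperbolicity bound $\operatorname{sech}^2x<\tanh x/x$ is just the ``tangent lies below the secant'' form of the same concavity.

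The paper's route is slightly shorter: monotonicity of $\operatorname{sech}^2$ is immediate and ties directly to the stability criterion $F_y<0$ used in the fold analysis. Yours buys three things:
\begin{enumerate}
\item It makes the root count and the sign of $F_0'(y_\ast)$ fully explicit, rather than leaving the concavity argument to the reader.
\item It parametrizes the whole nontrivial branch globally, $a=2\gamma T\,x\coth x$ with $x=y_\ast/(2T)$. This shows that $y_\ast$ grows monotonically from $0$ as $a$ increases past $2\gamma T$.
\item Your check of $g_{x\mu}\neq0$ states the transversality condition explicitly, which the paper leaves implicit in the linear coefficient $\frac{a}{2T}-\gamma$.
\end{enumerate}
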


\begin{proof}
Let
\[
    F_0(y)=a\tanh\frac{y}{2T}-\gamma y.
\]
If $a<2\gamma T$, then for $y>0$ we have $\tanh(y/(2T))<y/(2T)$, hence
\[
    F_0(y)
    <
    \left(\frac{a}{2T}-\gamma\right)y
    <
    0.
\]
For $y<0$, oddness gives $F_0(y)>0$. Hence there are no nonzero equilibria and trajectories point toward zero.

At $a=2\gamma T$ the same argument gives $F_0(y)<0$ for $y>0$ and $F_0(y)>0$ for $y<0$, but
\[
    F_0'(0)=\frac{a}{2T}-\gamma=0.
\]
Thus the equilibrium is stable but nonhyperbolic.

Now assume $a>2\gamma T$. Then $F_0'(0)>0$, so $y=0$ is unstable. For $y>0$, the function $F_0$ is initially positive and tends to $-\infty$ as $y\to+\infty$, with asymptotics $a-\gamma y$. Moreover,
\[
    F_0'(y)
    =
    \frac{a}{2T}\operatorname{sech}^2\frac{y}{2T}
    -
    \gamma,
\]
and $F_0'(y)$ is strictly decreasing for $y>0$. Therefore there is exactly one zero $y_\ast>0$ on the positive half-line, and at this zero $F_0'(y_\ast)<0$. By oddness there is a symmetric zero $-y_\ast$, also stable.

The expansion near $y=0$ is
\[
    F_0(y)
    =
    \left(\frac{a}{2T}-\gamma\right)y
    -
    \frac{a}{24T^3}y^3
    +
    O(y^5).
\]
The cubic coefficient is negative, which is the supercritical pitchfork case.
\end{proof}

The critical condition
\[
    a=2\gamma T
\]
has a simple interpretation. Increasing the positive feedback $a$ and decreasing the temperature $T$ strengthen the router's tendency to specialize. Increasing $\gamma$ strengthens stabilizing forgetting and supports the balanced state.

\section{Symmetry Breaking and Fold Bifurcations}

We now consider the general case $h\neq 0$. The parameter $h$ breaks the symmetry between the two experts. The pitchfork then unfolds into a pair of fold bifurcations.

Equilibria of \eqref{eq:main-model} are defined by
\begin{equation}
    F(y;a,\gamma,T,h)=0.
    \label{eq:equilibrium}
\end{equation}
Their stability is determined by the sign of
\begin{equation}
    F_y(y)
    =
    \frac{a}{2T}\operatorname{sech}^2\frac{y}{2T}
    -
    \gamma.
    \label{eq:linearization}
\end{equation}
An equilibrium is asymptotically stable if and only if $F_y(y)<0$.

\begin{theorem}[Bifurcation Set]
Fold bifurcations of \eqref{eq:main-model} form the following parametrized curve in the $(a,h)$ parameter plane:
\begin{equation}
    a(q)=2\gamma T\cosh^2 q,
    \qquad
    h(q)=2\gamma T\bigl(q-\sinh q\cosh q\bigr),
    \qquad
    q\in\mathbb{R}.
    \label{eq:fold-param}
\end{equation}
The point $q=0$ corresponds to the cusp
\[
    a=2\gamma T,
    \qquad
    h=0.
\]
For fixed $a>2\gamma T$ the system has three equilibria if and only if
\begin{equation}
    |h| < H(a),
    \label{eq:three-equilibria-condition}
\end{equation}
where
\begin{equation}
    H(a)
    =
    2\gamma T
    \left(
        \sinh q_a\cosh q_a - q_a
    \right),
    \qquad
    q_a
    =
    \operatorname{arcosh}
    \sqrt{\frac{a}{2\gamma T}}.
    \label{eq:H}
\end{equation}
Inside this region two equilibria are stable and one is unstable.
\end{theorem}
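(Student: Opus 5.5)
The argument rests on the shape of $F(\cdot;a,\gamma,T,h)$ as a function of $y$; the fold set is obtained by solving $F=0$ and $F_y=0$ simultaneously. Throughout we substitute $q=y/(2T)$.

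\emph{Step 1: the fold conditions.} From \eqref{eq:linearization}, $F_y=0$ means $\frac{a}{2T}\operatorname{sech}^2 q=\gamma$, that is, $a=2\gamma T\cosh^2 q$. Substituting into $F=0$, and using $a\tanh q=2\gamma T\sinh q\cosh q$ and $\gamma y=2\gamma T q$, we get
\[
h=2\gamma T\bigl(q-\sinh q\cosh q\bigr).
\]
This is \eqref{eq:fold-param}. Conversely, every $q$ gives a solution of $F=F_y=0$. To confirm these are genuine folds and not degenerate points, we compute $F_{yy}=-\frac{a}{2T^2}\operatorname{sech}^2 q\tanh q$, which is nonzero for $q\neq 0$, and $F_h=1\neq0$; these are the standard saddle-node nondegeneracy conditions. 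At $q=0$ we have $F_{yy}=0$ but $F_{yyy}\neq0$, and this is the cusp point $a=2\gamma T$, $h=0$, in agreement with the expansion in the proof of the Pitchfork Bifurcation theorem.

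\emph{Step 2: counting equilibria for fixed $a>2\gamma T$.} Write $F(y)=G(y)+h$ with $G(y)=a\tanh\frac{y}{2T}-\gamma y$, which is odd. The derivative $F_y$ is even in $y$ and strictly decreasing on $y>0$, as noted in the proof of the Pitchfork Bifurcation theorem, and $F_y(0)>0$. So $F_y$ vanishes exactly at $y=\pm y_c$ with $y_c=2Tq_a$, where $\cosh^2 q_a=a/(2\gamma T)$; this is the definition of $q_a$ in \eqref{eq:H}. Hence $G$ is decreasing on $(-\infty,-y_c)$, increasing on $(-y_c,y_c)$, and decreasing on $(y_c,\infty)$, with $G\to\mp\infty$ as $y\to\pm\infty$. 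Its local maximum is
\[
G(y_c)=2\gamma T(\sinh q_a\cosh q_a-q_a)=H(a),
\]
and by oddness its local minimum is $-H(a)$. We have $H(a)>0$ because $\sinh q\cosh q>q$ for $q>0$.

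The equation $F=0$ is $G(y)=-h$. By the monotonicity on each of the three intervals, it has three solutions exactly when $-H(a)<-h<H(a)$, i.e.\ $|h|<H(a)$. It has two solutions (one of them a fold) when $|h|=H(a)$, and one solution otherwise. This proves \eqref{eq:three-equilibria-condition}.

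\emph{Step 3: stability.} When $|h|<H(a)$, the three roots lie one in each monotonicity interval of $G$. At the outer two roots $F_y=G'<0$, so they are stable. At the middle root $G'>0$, so it is unstable. This also matches the potential picture of \eqref{eq:potential}: two minima separated by a maximum.

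Nothing here is deep. The one point needing care is the structural fact that $G'$ has exactly two zeros, which rests on the strict monotonicity of $\operatorname{sech}^2$ on the positive half-line. I would also check that the map $q\mapsto(a(q),h(q))$ traces exactly the two branches $h=\mp H(a)$. This holds because $h(q)$ is odd and strictly decreasing, since $h'(q)=2\gamma T(1-\cosh 2q)=-4\gamma T\sinh^2 q\le 0$. So $q>0$ gives $h=-H(a)$ and $q<0$ gives $h=+H(a)$.
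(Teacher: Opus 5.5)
Your proof is correct and follows essentially the same route as the paper: you solve $F=F_y=0$ with $q=y/(2T)$, locate the two critical points $y=\pm 2Tq_a$ for fixed $a>2\gamma T$, read off three roots exactly when $|h|<H(a)$, and use the alternating sign of $F_y$ for stability. Your version fills in details the paper's proof only sketches: the saddle-node nondegeneracy conditions $F_{yy}\neq 0$, $F_h\neq 0$ for $q\neq 0$, the explicit monotonicity intervals of $G$, and the boundary case $|h|=H(a)$, where there are exactly two equilibria.
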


\begin{proof}
A fold occurs when
\[
    F(y;a,\gamma,T,h)=0,
    \qquad
    F_y(y;a,\gamma,T,h)=0.
\]
Set $q=y/(2T)$. From $F_y=0$ we obtain
\[
    \frac{a}{2T}\operatorname{sech}^2 q=\gamma,
    \qquad
    a=2\gamma T\cosh^2 q.
\]
Substituting this into $F=0$ gives
\[
    h
    =
    \gamma y-a\tanh q
    =
    2\gamma Tq
    -
    2\gamma T\cosh^2 q\,\tanh q
    =
    2\gamma T(q-\sinh q\cosh q).
\]
This proves \eqref{eq:fold-param}.

For fixed $a>2\gamma T$, the equation $F_y=0$ has two critical points $y=\pm 2Tq_a$, with $q_a$ given by \eqref{eq:H}. Between the two values of $h$ at which the graph of $F$ is tangent to the axis, the equation $F=0$ has three roots; outside this interval it has one root. Substituting $q=\pm q_a$ into \eqref{eq:fold-param} gives the boundaries $h=\mp H(a)$.

Since the system is one-dimensional, stability alternates. When there are three roots, the outer equilibria are stable and the middle one is unstable.
\end{proof}

\section{Local Normal Form of the Cusp Catastrophe}

Near the point
\[
    y=0,
    \qquad
    a=2\gamma T,
    \qquad
    h=0
\]
the right-hand side has the expansion
\begin{equation}
    F(y;a,\gamma,T,h)
    =
    \left(\frac{a}{2T}-\gamma\right)y
    -
    \frac{a}{24T^3}y^3
    +
    h
    +
    O(y^5).
    \label{eq:normal-expansion}
\end{equation}
If
\[
    \mu=\frac{a}{2T}-\gamma,
    \qquad
    \varepsilon=h,
\]
then, up to smooth rescaling of the state variable and time, the local normal form is
\begin{equation}
    \dot x
    =
    \mu x
    -
    x^3
    +
    \varepsilon
    +
    \text{higher-order terms}.
    \label{eq:cusp-normal-form}
\end{equation}
The unperturbed system $\varepsilon=0$ has a pitchfork bifurcation, and $\varepsilon$ gives its imperfect unfolding. For the normal form without higher-order terms, the bifurcation set is
\begin{equation}
    4\mu^3-27\varepsilon^2=0,
    \qquad
    \mu>0.
\end{equation}
This is the local algebraic model of the cusp and is consistent with the exact parametric description \eqref{eq:fold-param}.

\begin{proposition}[Nondegeneracy of the Cusp]
At $(y,a,h)=(0,2\gamma T,0)$, system \eqref{eq:main-model} has a nondegenerate cusp singularity in the sense of the local normal form.
\end{proposition}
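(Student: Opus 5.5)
We verify the standard nondegeneracy conditions for a cusp in a one-dimensional family $\dot y=F(y;\alpha)$ with a two-dimensional parameter $\alpha=(a,h)$. At the organizing point $(y,a,h)=(0,2\gamma T,0)$ these are the degeneracy conditions
\[
F=0,\qquad F_y=0,\qquad F_{yy}=0,
\]
together with the genericity conditions
\[
F_{yyy}\neq 0,\qquad
\det\begin{pmatrix} F_a & F_h\\ F_{ya} & F_{yh}\end{pmatrix}\neq 0 ,
\]
all evaluated at that point. The last condition is the transversality (versality) requirement of the unfolding; see \cite{Kuznetsov2004}. Once these hold, the cited theorem supplies a smooth change of state variable, a smooth reparametrization $(a,h)\mapsto(\mu,\varepsilon)$, and a time rescaling. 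These bring the system to \eqref{eq:cusp-normal-form}, the claimed local normal form.

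The computation rests on the expansion \eqref{eq:normal-expansion} and the formula \eqref{eq:linearization}.

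\textbf{Degeneracy conditions.} At $y=0$, $h=0$ we have $F=0$. Since $F_y(0)=a/(2T)-\gamma$, this vanishes at $a=2\gamma T$. Because $F-h$ is odd in $y$, $F_{yy}(0)=0$ for all parameter values.

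\textbf{Cubic coefficient.} From \eqref{eq:normal-expansion}, $F_{yyy}(0)=-a/(4T^3)$. At the cusp this equals $-\gamma/(2T^2)$, which is strictly negative. The negative sign is what gives the supercritical orientation $-x^3$; this is consistent with the pitchfork theorem.

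\textbf{Transversality determinant.} At $y=0$ we have $F_a=\tanh(0)=0$ and $F_h=1$. Differentiating $F_y$ gives $F_{ya}=\operatorname{sech}^2(0)/(2T)=1/(2T)$ and $F_{yh}=0$. The determinant is therefore $0\cdot 0-1\cdot\frac{1}{2T}=-\frac{1}{2T}\neq 0$. Hence the map $(a,h)\mapsto(F,F_y)|_{y=0}=(h,\,a/(2T)-\gamma)$ is a local diffeomorphism. It coincides exactly with the choice $\varepsilon=h$, $\mu=a/(2T)-\gamma$ made in the text.

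\textbf{Explicit rescaling.} As a concrete check, set $y=\kappa x$ and rescale time by $\tau$, choosing $\kappa^2 a/(24T^3)\cdot\tau=1$. This normalizes the cubic coefficient to $-1$, giving $\dot x=\mu x-x^3+\varepsilon'+O(x^5)$, where $\varepsilon'$ is $h$ multiplied by a nonzero factor. The $O(x^5)$ terms are removable by the cited theorem because the cubic coefficient is nonzero.

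\textbf{Consistency check.} We also confirm agreement with \eqref{eq:fold-param}. Expanding $a(q)$ and $h(q)$ for small $q$ gives $\mu\approx\gamma q^2$ and $h\approx-\tfrac{4}{3}\gamma T q^3$. Substituting the rescalings into $4\mu^3-27\varepsilon^2=0$ should reproduce this to leading order.

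\textbf{Main obstacle.} The only real difficulty is invoking the correct form of the cusp recognition theorem and ensuring all its hypotheses are covered. The hypothesis most easily overlooked is the transversality determinant. Its verification above is immediate, because $F_a$ and $F_{yh}$ both vanish at $y=0$.
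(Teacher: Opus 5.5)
Your proposal is correct and follows the paper's argument. Both verify $F=F_y=F_{yy}=0$ and $F_{yyy}=-\gamma/(2T^2)\neq 0$ at $(0,2\gamma T,0)$, and both then check that $(a,h)$ unfold the singularity transversally. Your determinant $F_aF_{yh}-F_hF_{ya}=-1/(2T)\neq 0$ is the precise form of the paper's remark that $F_h=1$ and $F_{ya}\neq 0$ give an independent unfolding; that remark tacitly uses $F_a=F_{yh}=0$ at the point. Your consistency check against \eqref{eq:fold-param} is correct but not needed.
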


\begin{proof}
At the specified point,
\[
    F=0,\qquad F_y=0,\qquad F_{yy}=0,
\]
and the third derivative is nonzero:
\[
    F_{yyy}(0;2\gamma T,\gamma,T,0)
    =
    -\frac{\gamma}{2T^2}
    \neq 0.
\]
Moreover,
\[
    F_h=1,
    \qquad
    F_{ya}=\frac{1}{2T}\neq 0.
\]
Thus the parameters $a$ and $h$ provide an independent two-parameter unfolding of the cubic degeneracy. This is precisely the local structure of the cusp catastrophe.
\end{proof}

\section{Hysteresis and Interpretation for Expert Load}

Let $a>2\gamma T$ be fixed. Then for $|h|<H(a)$ the system has two stable states. One corresponds to preference for the first expert, and the other to preference for the second. In terms of load,
\[
    u=\tanh\frac{y}{2T},
\]
these states give nonzero values of $u$ with opposite signs.

If $h$ varies slowly, a trajectory remains near the current stable equilibrium until that equilibrium reaches one of the folds. At $h=H(a)$ or $h=-H(a)$ the corresponding local minimum of the potential \eqref{eq:potential} disappears, and the system jumps to the other stable equilibrium. When $h$ is varied back, the jump occurs at a different parameter value. This creates a hysteresis loop.

In MoE terms, the mechanism is as follows. If the router reinforces the already selected expert faster than regularization returns the scores to balance, then the symmetric state loses stability. Small asymmetries in the data or initialization select one of the two branches, after which recovering balance requires crossing a fold threshold. In this model, that threshold property is what makes load collapse persistent.

\begin{remark}
In this interpretation, a load-balancing loss may act in several ways: it can decrease the effective feedback parameter $a$, increase the regularizing coefficient $\gamma$, increase the effective temperature $T$, or add negative feedback depending on current load. All of these mechanisms move the system away from the multistable region.
\end{remark}

\section{Numerical Experiments with Batch Routing}

The analytical part describes mean-field dynamics. To connect it with MoE routing, we next consider the original discrete rule \eqref{eq:discrete-update} numerically in batch form. At each step a batch of $B$ tokens is routed; the number of tokens sent to the first expert is denoted $N_1$, and $N_2=B-N_1$. The observed quantity in all experiments is the empirical load imbalance
\[
    \widehat u = \frac{N_1-N_2}{B}.
\]
The figures below show expert load directly in the stochastic router.

\subsection{A Small Trainable MoE Model with an Input-Dependent Router}

To test the connection between the reduced picture and trainable routing, consider a small MoE model on a synthetic regression problem with two regimes. Let $x\in[-1,1]$ and define the target
\[
    y(x)
    =
    \begin{cases}
        -1-0.7x+0.05\sin(8\pi x), & x<0,\\
        1+0.7x+0.05\sin(8\pi x), & x\ge 0.
    \end{cases}
\]
The two experts are affine regressors, and the router depends on the input:
\[
    p_1(x)=\sigma\left(\frac{\alpha x+\beta+h}{T}\right),
    \qquad
    p_2(x)=1-p_1(x).
\]
Here $\alpha$ and $\beta$ are trainable router parameters, while $h$ is an external bias modeling data skew or an architectural preference for one expert. The model prediction is
\[
    \widehat y(x)=p_1(x) f_1(x)+(1-p_1(x))f_2(x),
\]
and the expert parameters together with $(\alpha,\beta)$ are trained by stochastic gradient descent on MSE with a small regularization of router weights.

In Figure \ref{fig:tiny-trainable-moe-input-router}, the parameter $h$ is slowly scanned upward and downward; at each new value training continues from the previous state. The top panel shows the expected load imbalance $\mathbb{E}\widehat u=2\mathbb{E}_x p_1(x)-1$, the middle panel shows the learned decision boundary $x_\ast=-(\beta+h)/\alpha$, and the bottom panel shows MSE. In the central bias region, the model uses both experts and learns a boundary near $x=0$. For sufficiently large $|h|$, the external bias suppresses the input-dependent partition, the router collapses to one expert, and the error increases. The two scan directions correspond to different expert label assignments, an expected consequence of the symmetry of the two-expert model.

\begin{figure}[htbp]
    \centering
    \includegraphics[width=0.78\textwidth]{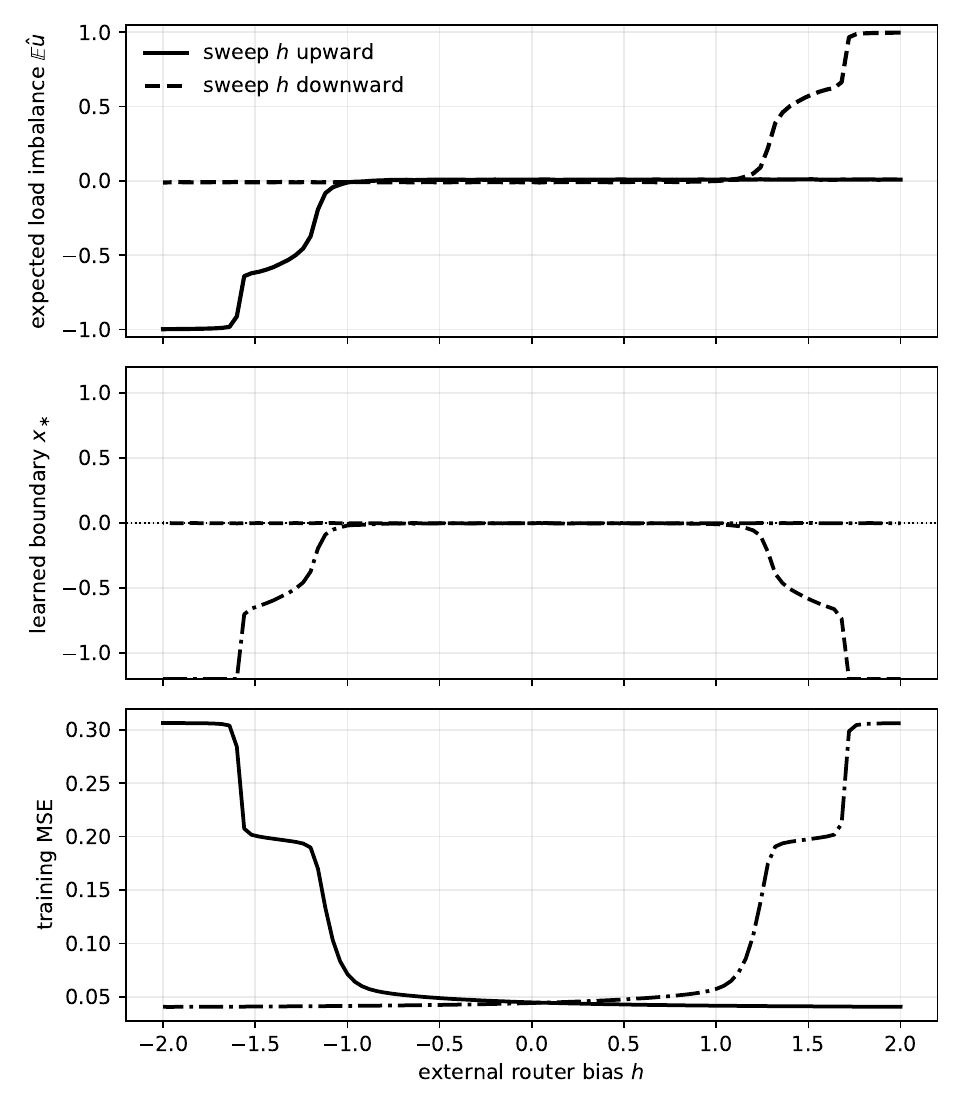}
    \caption{A small trainable MoE model with an input-dependent router. In the central region the model uses both experts and keeps the boundary near $x=0$; for large external bias, the load collapses to one expert.}
    \label{fig:tiny-trainable-moe-input-router}
\end{figure}

\subsection{Pilot PyTorch Experiment with a Hard Top-1 Router}

The previous experiment uses smooth expert mixing and is therefore closer to a soft MoE than to a sparse MoE. To illustrate which parts of the mechanism persist under discrete routing, consider a small PyTorch model \cite{Paszke2019} with hard top-$1$ expert selection. The experts are again affine regressors, and the router has two logits
\[
    z(x)=Wx+(h/2,-h/2).
\]
In the forward pass one expert is selected:
\[
    g(x)=\operatorname{onehot}\arg\max_i z_i(x),
\]
while router training uses a straight-through estimate \cite{Bengio2013}:
\[
    g_{\mathrm{st}}(x)=g(x)+p(x)-\operatorname{stopgrad}p(x),
    \qquad
    p(x)=\operatorname{softmax}(z(x)/T).
\]
The objective is MSE; in some runs we add the standard penalty on router soft importance,
\[
    L_{\mathrm{lb}}
    =
    \lambda_{\mathrm{lb}}
    \sum_{i=1}^2
    \left(
        \frac{1}{B}\sum_{x\in B}p_i(x)-\frac12
    \right)^2.
\]
This experiment is not a proof of the mean-field model. Hard top-$1$ introduces additional effects, especially dead-router regions. Its narrower role is to illustrate whether a similar collapse/balancing mechanism appears in a minimal trainable sparse MoE.

Figure \ref{fig:pytorch-top1-moe-bias-sweep} scans the external bias $h$ slowly upward and downward. With $\lambda_{\mathrm{lb}}=0$, hard load quickly saturates near $\widehat u=\pm 1$, corresponding to effective use of a single expert. In this region MSE increases because one affine expert is forced to approximate both regimes of the target function. With $\lambda_{\mathrm{lb}}=1$, the transition is smoother and full saturation of the load is delayed: balancing does not remove the external bias, but it weakens collapse.

\begin{figure}[htbp]
    \centering
    \includegraphics[width=0.82\textwidth]{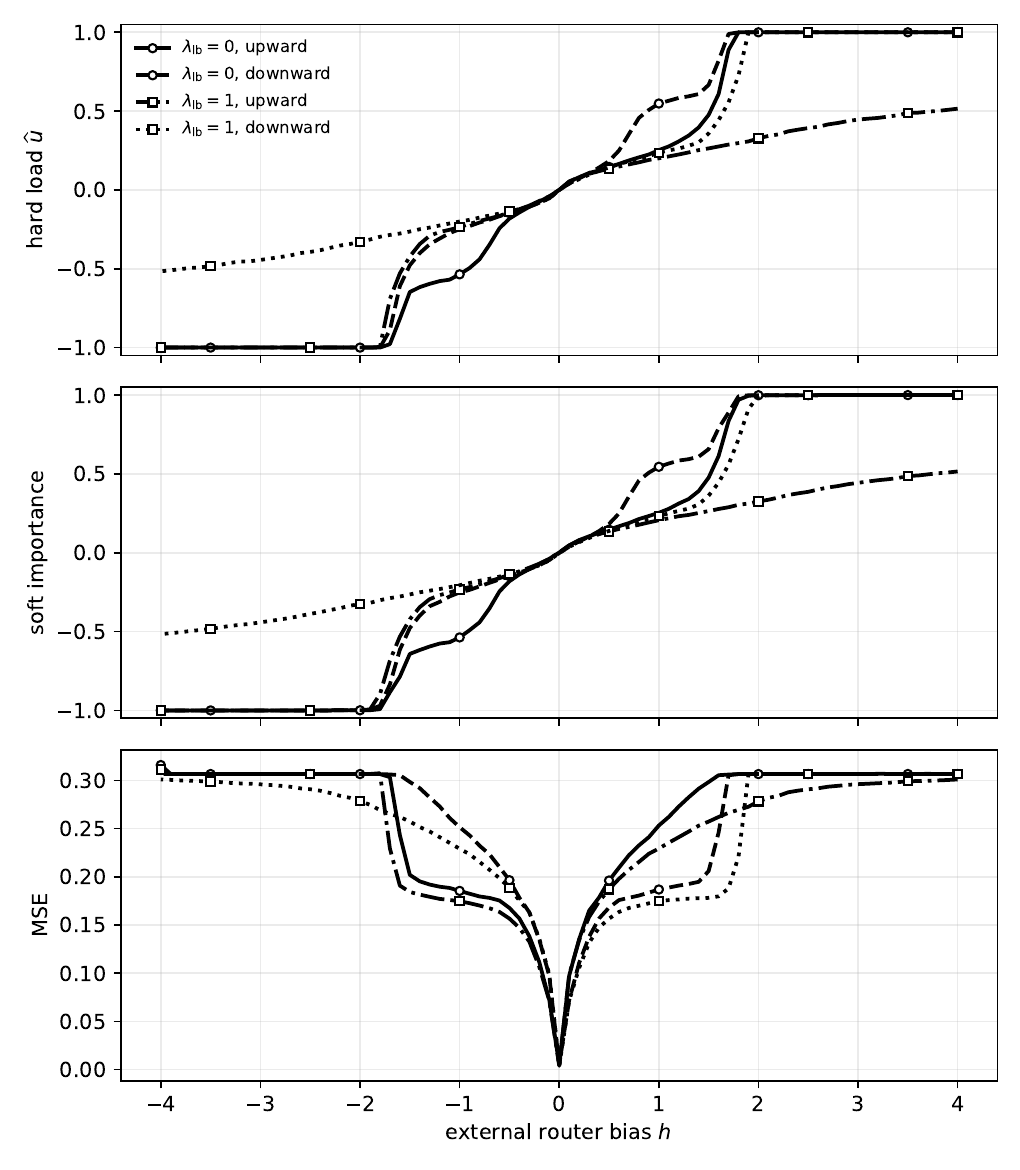}
    \caption{Pilot PyTorch MoE with hard top-$1$ routing. Top: empirical hard expert load. Middle: router soft importance. Bottom: MSE. Solid curves correspond to no load-balancing penalty; dashed curves correspond to $\lambda_{\mathrm{lb}}=1$.}
    \label{fig:pytorch-top1-moe-bias-sweep}
\end{figure}

Figure \ref{fig:pytorch-top1-moe-balance-scan} shows the effect of the balancing loss in the hard-routed model. At fixed external bias $h=2$, the model is trained from random initialization for different $\lambda_{\mathrm{lb}}$. Small penalty values barely change the final hard load: the model remains close to collapse. After a finite threshold, the balancing term substantially decreases $|\widehat u|$, while MSE also decreases. This agrees qualitatively with the reduced picture: negative feedback on load should move the system out of the region of stable imbalance. It is not, however, a test of the fold formula for a discontinuous system. The hard top-$1$ model has its own dead-router regimes, and the STE provides only a heuristic gradient for training.

\begin{figure}[htbp]
    \centering
    \includegraphics[width=0.82\textwidth]{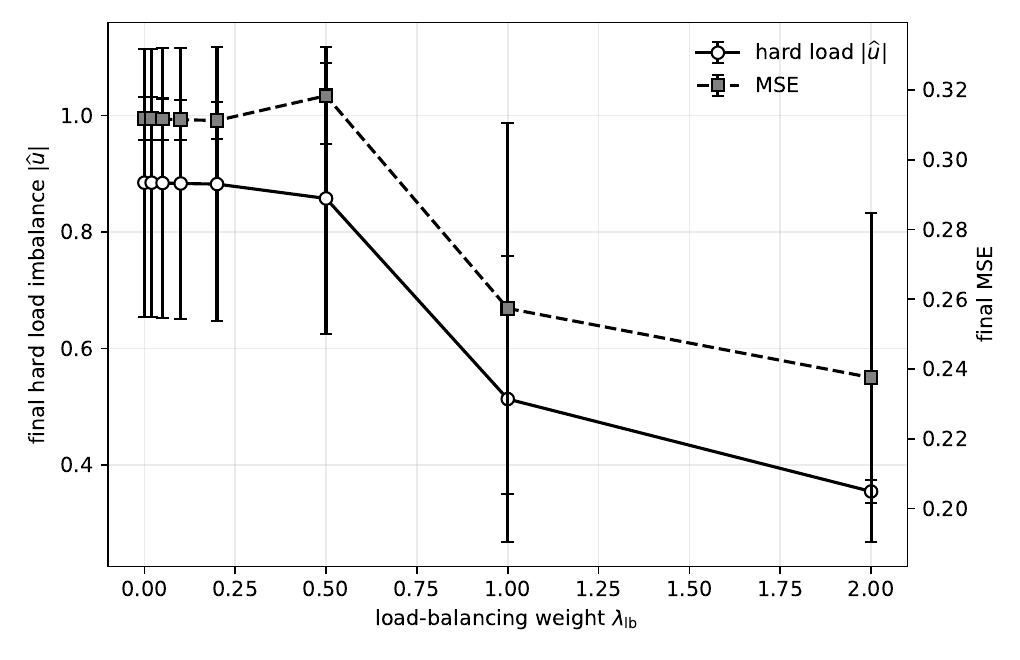}
    \caption{Effect of a load-balancing penalty on a trainable hard top-$1$ MoE at fixed bias $h=2$. Solid curve with circles: final absolute hard load imbalance. Dashed curve with squares: final MSE. Means and standard deviations over independent initializations are shown.}
    \label{fig:pytorch-top1-moe-balance-scan}
\end{figure}

\subsection{Classification Experiment on \texttt{digits}}

As a controlled classification sanity check, consider the small handwritten-digit classification task \texttt{digits} from \texttt{scikit-learn} \cite{Pedregosa2011}. The dataset consists of $8\times 8$ grayscale images from ten classes. We use an MoE with two hard top-$1$ experts; each expert is a small bottleneck MLP
\[
    \mathbb{R}^{64}\to \mathbb{R}^{4}\to \mathbb{R}^{10}.
\]
Limited expert capacity is essential here: if the experts are made sufficiently wide, one expert can solve the task by itself, and collapse hardly affects accuracy. This experiment therefore probes the regime in which specialization between two experts has measurable value.

The router is
\[
    z(x)=Wx+c+(h/2,-h/2),
\]
followed by hard top-$1$ selection with the same straight-through estimate as above. The model is trained with cross-entropy; balancing is implemented by the same penalty on mean router soft importance. Each point is averaged over five independent initializations.

Figure \ref{fig:pytorch-digits-moe-bias-scan} scans the external bias $h$. Without balancing, increasing $h$ increases the test hard-load imbalance and simultaneously decreases test accuracy. With balancing $\lambda_{\mathrm{lb}}=1$, the load remains closer to symmetric and accuracy does not show the same drop. This illustrates the applied meaning of the reduced picture: a persistent router skew may be associated with degraded performance, and negative feedback on load can reduce both effects.

\begin{figure}[htbp]
    \centering
    \includegraphics[width=0.82\textwidth]{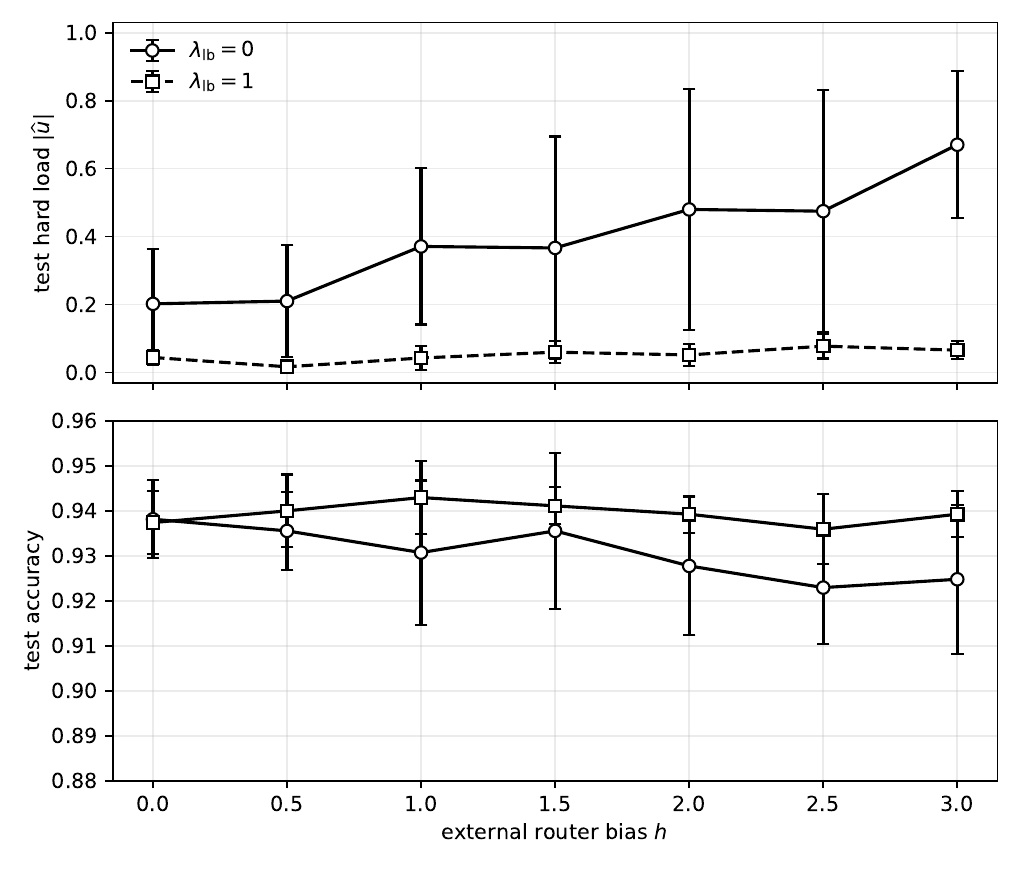}
    \caption{Hard top-$1$ MoE on \texttt{digits}. Top: absolute test load imbalance. Bottom: test accuracy. Without balancing, external bias increases collapse and degrades performance; a load-balancing penalty keeps the load closer to symmetry.}
    \label{fig:pytorch-digits-moe-bias-scan}
\end{figure}

Figure \ref{fig:pytorch-digits-moe-balance-scan} fixes $h=2$ and varies $\lambda_{\mathrm{lb}}$. The model transitions from a regime with noticeable imbalance to nearly balanced expert usage. Accuracy is not monotone at every individual point, but the overall trend is stable: decreasing hard-load imbalance is accompanied by recovery of performance. This result is important for interpretation, but it should be understood as a small-scale sanity check. Here balancing equalizes expert counters and improves generalization of a small MoE model in a limited-capacity regime.

\begin{figure}[htbp]
    \centering
    \includegraphics[width=0.82\textwidth]{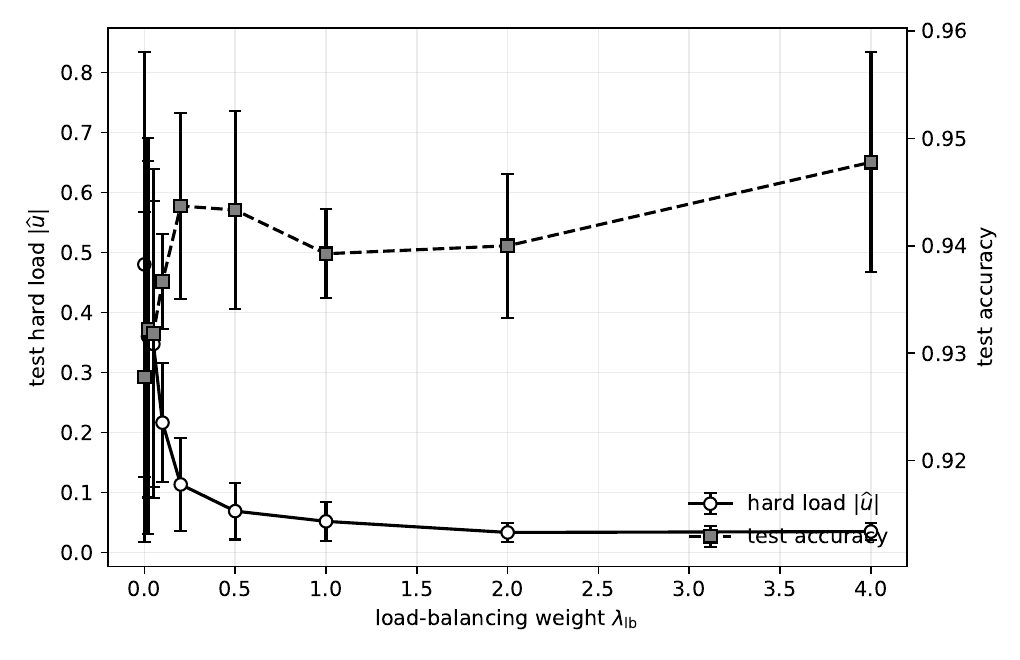}
    \caption{Scan over the load-balancing strength on \texttt{digits} at fixed bias $h=2$. As $\lambda_{\mathrm{lb}}$ increases, hard-load imbalance decreases and test accuracy recovers on average.}
    \label{fig:pytorch-digits-moe-balance-scan}
\end{figure}

\subsection{Mean-Field Description of Empirical Load}

We first test whether the limiting ODE describes the averaged dynamics of the batch router. Figure \ref{fig:moe-stochastic-mean-field} compares the two at $a=3$, $h=0.08$, $\gamma=T=1$, $B=512$, and $\eta=0.002$. The solid curve is the mean empirical load imbalance over $40$ independent runs of the discrete router; the dashed curve is the mean-field ODE solution mapped to load by $u=\tanh(y/2T)$.

\begin{figure}[htbp]
    \centering
    \includegraphics[width=0.78\textwidth]{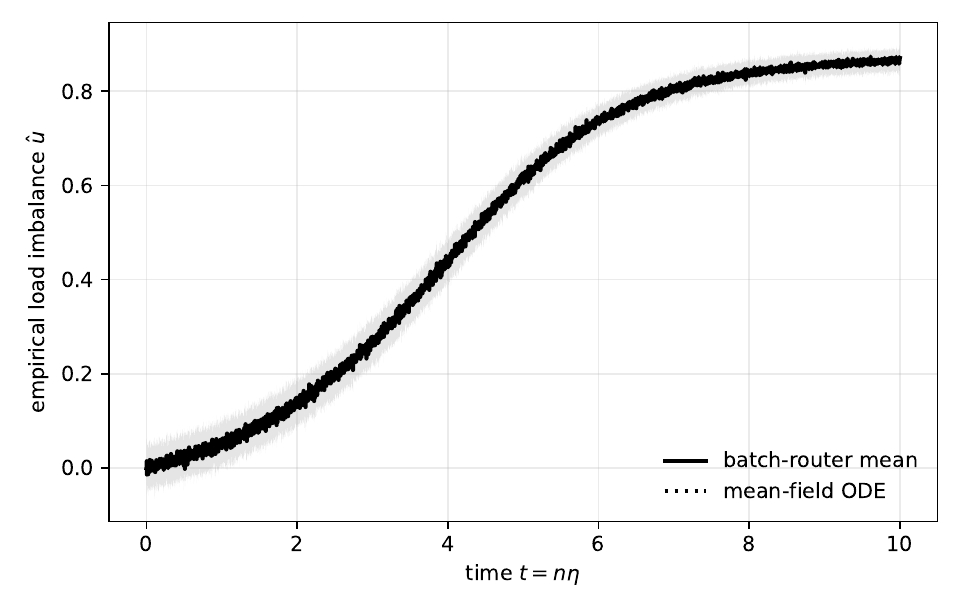}
    \caption{Batch token routing and the mean-field limit. The shaded band shows one standard deviation of the empirical imbalance $\widehat u$ over independent runs.}
    \label{fig:moe-stochastic-mean-field}
\end{figure}

\subsection{Hysteresis in Expert Load}

The next experiment fixes $a=4$, $\gamma=T=1$ and slowly varies the external skew $h=b_1-b_2$. For each value of $h$, several steps of batch routing are performed and the actual expert load is averaged. The same sweep is then repeated in the reverse direction.

Figure \ref{fig:moe-hysteresis-load} shows that at the same value of $h$ the router may be in different load regimes depending on its history. This is the MoE interpretation of hysteresis: if one expert has already obtained a persistent advantage, a small change in skew does not return the system to balanced load. The transition occurs only when the fold threshold is reached and the current stable routing branch disappears.

\begin{figure}[htbp]
    \centering
    \includegraphics[width=0.78\textwidth]{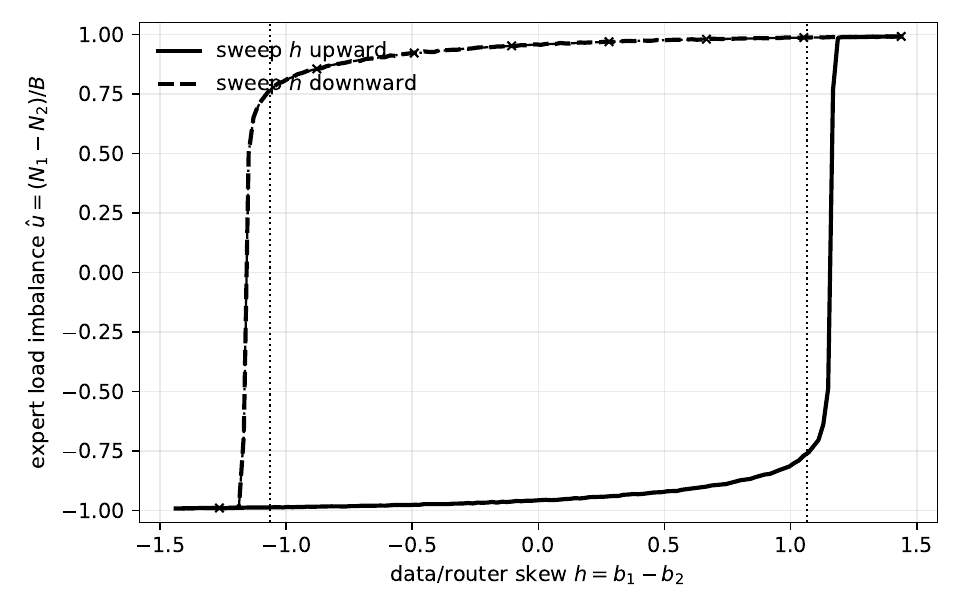}
    \caption{Hysteresis of empirical load in the two-expert batch router. The vertical axis shows $\widehat u=(N_1-N_2)/B$, not the hidden variable $y$. Dashed vertical lines are the fold thresholds predicted by the mean-field model.}
    \label{fig:moe-hysteresis-load}
\end{figure}

\subsection{Temperature, Regularization, and the Collapse Region}

Finally, consider the symmetric case $h=0$ and scan $T$ and $\gamma$ at fixed reinforcement strength $a=3$. For each parameter pair, we run stochastic batch-routing dynamics with a small random initial asymmetry. The fill tone in Figure \ref{fig:moe-collapse-map} shows the average final value of $|\widehat u|$. Dark regions correspond to nearly balanced expert usage, and light regions to persistent load skew.

The dashed line shows the analytical threshold $a=2\gamma T$. Agreement with the numerical transition means that softmax temperature and regularizing forgetting act as controllable stability parameters for routing: increasing $T$ or $\gamma$ moves the system from the load-collapse region to the balanced-usage region.

\begin{figure}[htbp]
    \centering
    \includegraphics[width=0.78\textwidth]{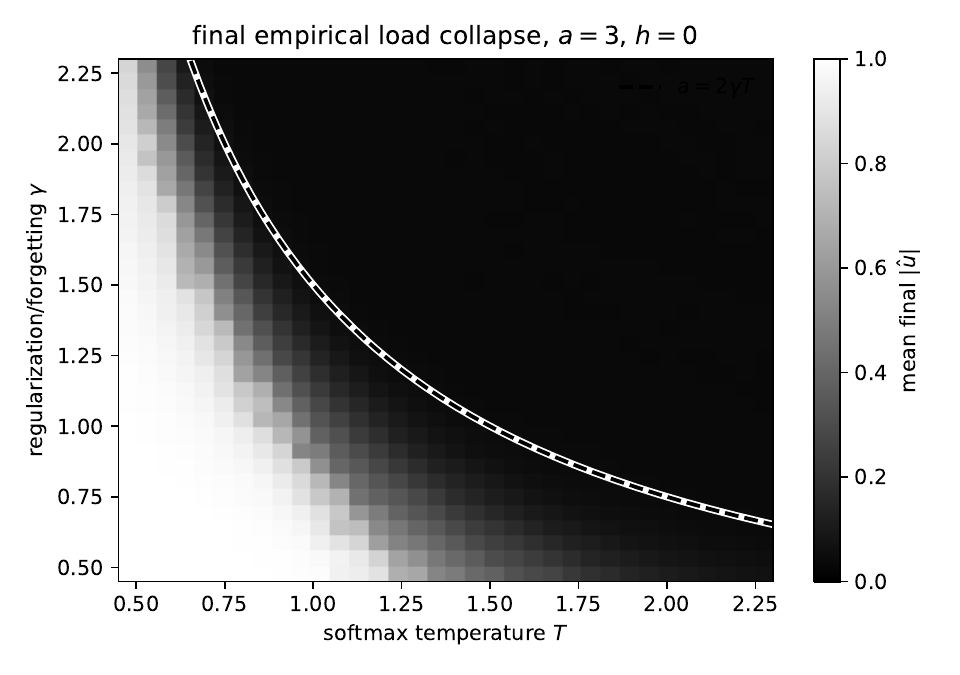}
    \caption{Final empirical load imbalance at $h=0$ as a function of softmax temperature and regularization. The dashed line is the mean-field threshold $a=2\gamma T$.}
    \label{fig:moe-collapse-map}
\end{figure}

\subsection{Quantitative Tests of the Predictions}

The previous experiments show the mechanisms qualitatively. We now test three numerical predictions of the model more directly.

The first prediction is the critical condition
\[
    a=2\gamma T.
\]
At fixed $a=3$ and for different $\gamma$, we measure a finite-time onset of collapse: for each temperature we run batch-routing dynamics at $h=0$ and record the final mean $|\widehat u|$. The estimated threshold is the value of $T$ at which the final imbalance reaches $|\widehat u|=0.1$. Figure \ref{fig:validation-critical-temperature} compares this estimate with the formula $T_c=a/(2\gamma)$. The measured finite-time onset lies below the analytical critical temperature, as expected: near the threshold the linear growth of the unstable mode is slow, so at finite time and finite batch size the observed imbalance appears later. Nevertheless, the dependence on $\gamma$ reproduces the predicted scale.

\begin{figure}[htbp]
    \centering
    \includegraphics[width=0.72\textwidth]{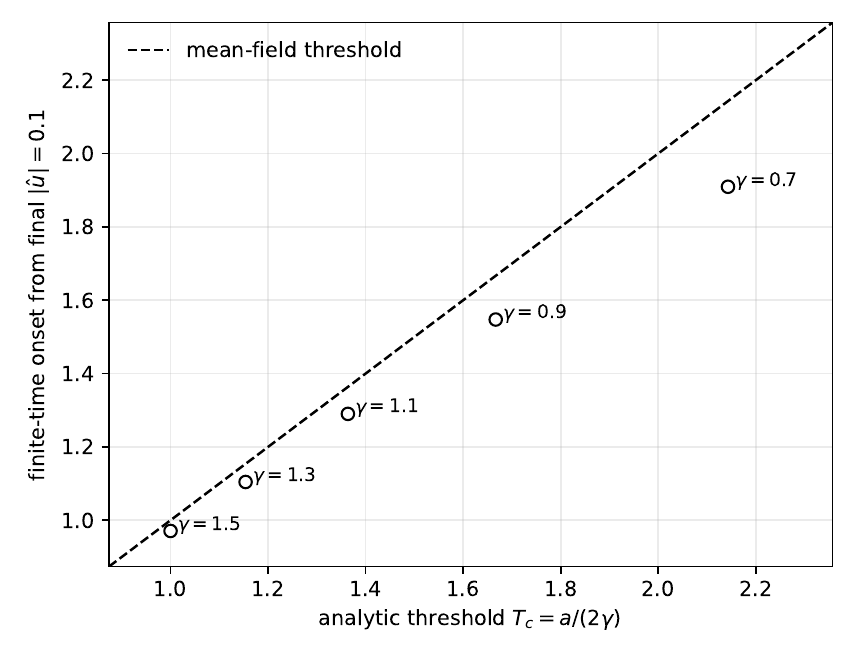}
    \caption{Test of the temperature threshold. Points show the finite-time onset of load collapse in batch-routing simulation; the dashed line is the analytical condition $T_c=a/(2\gamma)$.}
    \label{fig:validation-critical-temperature}
\end{figure}

The second prediction concerns the width of the hysteresis loop. For each $a>2\gamma T$, two slow sweeps over $h$ are performed, one upward and one downward. The switching threshold is defined as the point at which the empirical imbalance $\widehat u$ changes sign. The measured width $\Delta h$ is compared with
\[
    \Delta h_{\mathrm{mf}}(a)
    =
    2H(a),
\]
where $H(a)$ is defined in \eqref{eq:H}. Figure \ref{fig:validation-hysteresis-width} shows that batch-routing dynamics reproduces the growth of the loop width as the positive feedback strength increases.

\begin{figure}[htbp]
    \centering
    \includegraphics[width=0.78\textwidth]{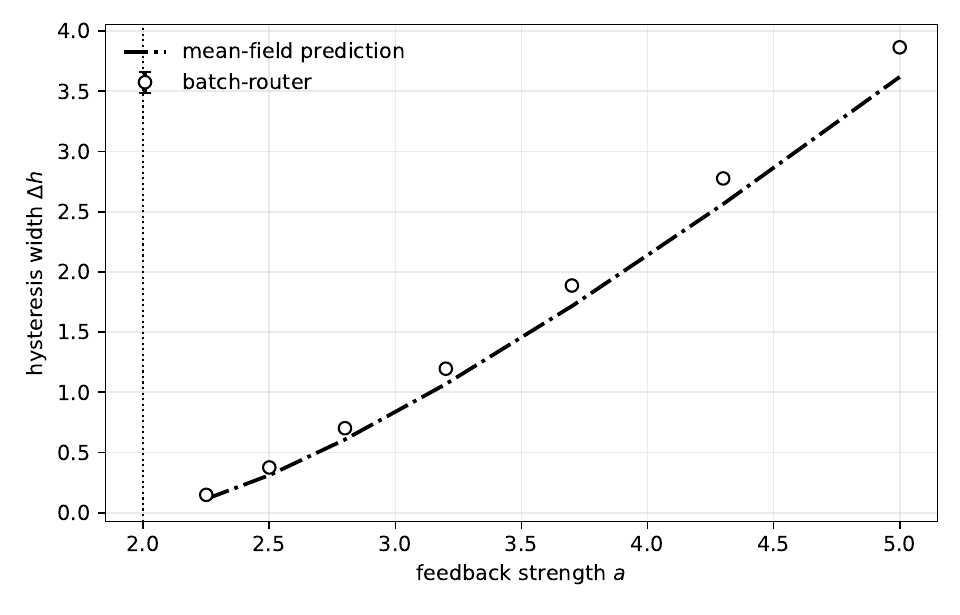}
    \caption{Hysteresis width as a function of positive feedback strength $a$. Points are batch-routing simulations; the dashed line is the mean-field prediction $2H(a)$.}
    \label{fig:validation-hysteresis-width}
\end{figure}

The third prediction concerns balancing mechanisms. Add negative feedback on load to the discrete rule:
\[
    r_i^{n+1}
    =
    r_i^n
    +
    \eta
    \left(
        a\ell_i^n
        -
        \rho(\ell_i^n-\tfrac12)
        -
        \gamma r_i^n
        +
        b_i
    \right),
\]
where $\ell_i^n=N_i^n/B$ is the fraction of the batch sent to expert $i$. In the difference variable this replaces the effective positive feedback strength $a$ by $a_{\mathrm{eff}}=a-\rho$. The model therefore predicts the disappearance of hysteresis when
\[
    a-\rho \le 2\gamma T.
\]
Figure \ref{fig:validation-balancing-feedback} is consistent with this behavior: as $\rho$ increases, the measured loop width decreases and becomes small near the predicted threshold.

\begin{figure}[htbp]
    \centering
    \includegraphics[width=0.78\textwidth]{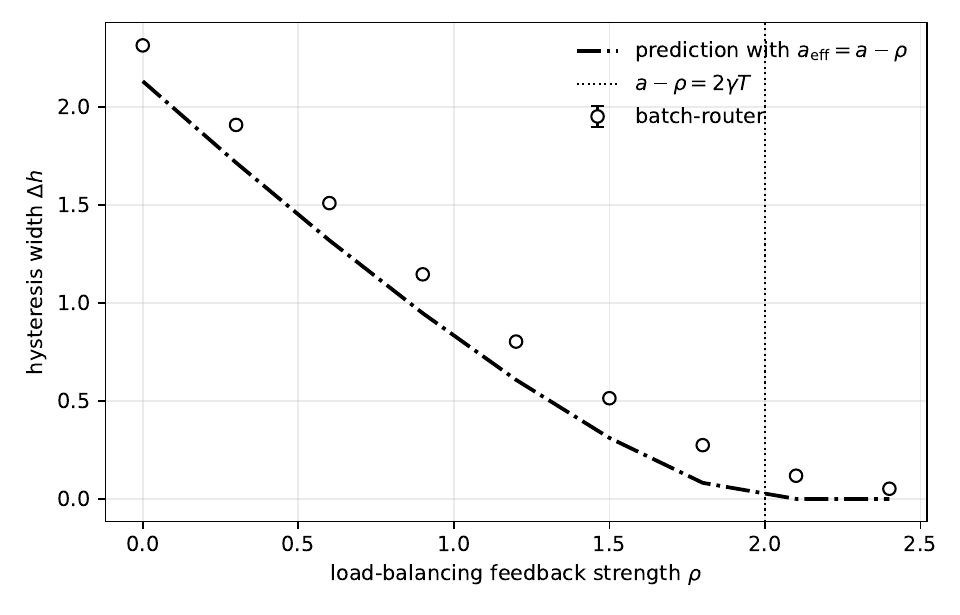}
    \caption{Suppression of hysteresis by negative feedback on load. The dashed line is the prediction with effective parameter $a_{\mathrm{eff}}=a-\rho$; the vertical line marks $a-\rho=2\gamma T$.}
    \label{fig:validation-balancing-feedback}
\end{figure}

\section{Limitations}

The proposed system is deliberately minimal. The analytical part describes only two experts and continuous softmax routing. The PyTorch experiments partially relax this restriction because they use hard top-$1$ selection and include both synthetic regression and a small classification task. In realistic MoE systems, several additional effects may play an essential role:
\begin{itemize}
    \item discrete top-$k$ selection and router noise;
    \item finite batch size and stochasticity of load estimates;
    \item explicit dependence of expert quality on the data distribution;
    \item interactions among more than two experts;
    \item additional balancing terms in the loss.
\end{itemize}
The results should therefore be understood as an analysis of a local mechanism, not as a theorem about arbitrary MoE models. Nevertheless, a two-expert reduction arises naturally when studying competition between two dominant experts or one contrast mode in a multidimensional system. The dead-router behavior observed in the hard top-$1$ experiment also indicates a limitation of the smooth model: a discrete argmax may create practically irreversible dead-router regimes that do not reduce to an ordinary fold of a smooth vector field.

\section{Conclusion}

We introduced a minimal model of an adaptive softmax router:
\[
    \dot y
    =
    a\tanh\frac{y}{2T}
    -
    \gamma y
    +
    h.
\]
For this model we completely described the bifurcation picture. In the symmetric case we proved a supercritical pitchfork bifurcation at $a=2\gamma T$. Under symmetry breaking we derived an exact parametric description of the fold set and showed that locally it has the structure of a cusp catastrophe. The model exhibits hysteresis and abrupt transitions between stable routing regimes.

The main conclusion is that positive feedback in a softmax router can by itself create a region of multistability. This gives a rigorous low-dimensional model on which stability thresholds and load-balancing mechanisms can be analyzed.

Pilot PyTorch experiments with hard top-$1$ routing show that the same qualitative mechanism persists in small trainable sparse MoE models: external bias can lead to load collapse and increased error, while a load-balancing penalty can partially restore the use of both experts. On \texttt{digits}, this effect is visible in test accuracy when expert capacity is limited. At the same time, these experiments reveal a difference from the smooth theory: hard routing creates dead-router regions in which recovering balance is substantially harder.

\section*{Acknowledgements and AI Assistance}

AI-based tools were used for editing and debugging the English text and for assistance in writing scripts used in the numerical experiments. The author is responsible for the mathematical content, numerical interpretation, and final form of the manuscript.

\end{document}